\numberwithin{equation}{section}
\newcounter{count}
\newcommand{\num}{\stepcounter{count}\the\value{count}}
\renewcommand{\mod}{\ \mathrm{mod}\ }
\newtheorem{theorem}{Theorem}[section]
\newtheorem{lemma}[theorem]{Lemma}
\newtheorem{corollary}[theorem]{Corollary}
\newtheorem{problem}[theorem]{Problem}
\theoremstyle{remark}
\newtheorem{remark}[theorem]{Remark}
\theoremstyle{definition}
\newtheorem{notation}[theorem]{Notation}
\begin{document}

\title[Intervals without primes near an iterated linear recurrence sequence]{Intervals without primes near an iterated\\ linear recurrence sequence}

\author[K. Saito]{Kota Saito}
\address{Kota Saito\\ Department of Mathematics\\ College of Science $\&$ Technology \\ Nihon University\\Kanda\\ Chiyoda-ku\\ Tokyo\\
101-8308\\ Japan} 
\email{saito.kota@nihon-u.ac.jp}

\thanks{The previous address of the author is  ``Faculty of Pure and Applied Sciences, University of Tsukuba, 1-1-1 Tennodai, Tsukuba, Ibaraki, 305-8577, Japan'' }

\subjclass[2020]{11B37,	11K16, 11B50}
\keywords{Linear recurrence sequence, composite numbers, Pisot number, Salem number}

\begin{abstract}
Let $M$ be a fixed positive integer. Let  $(R_{j}(n))_{n\ge 1}$ be a linear recurrence sequence for every $j=0,1,\ldots, M$, and we set $f(n)=(R_0\circ \cdots \circ R_M)(n)$, where $(S\circ T)(n)\coloneqq S(T(n))$. In this paper, we obtain sufficient conditions on $(R_{0}(n))_{n\ge 1},\ldots, (R_{M}(n))_{n\ge 1}$  so that the intervals $(|f(n)|-c\log n, |f(n)|+c\log n)$ do not contain any prime numbers for infinitely many integers $n\ge 1$, where  $c$ is an explicit positive constant depending only on the orders of $R_0,\ldots, R_M$. 
%
As a corollary, we show that if for each $j=1,2,\ldots, M$, the sequence $(R_j(n))_{n\ge 1}$ is positive, strictly increasing, and the constant term of its characteristic polynomial is $\pm 1$, then for every Pisot or Salem number $\alpha$,  the numbers $\lfloor \alpha^{(R_1\circ \cdots \circ R_M)(n)} \rfloor $ are composite for infinitely many integers $n\ge 1$. 
\end{abstract}

\maketitle

\section{Introduction}

 In 1947, Mills \cite{Mills} constructed a real number $A$ such that $\lfloor A^{3^n}\rfloor $ is a prime number for every integer $n\ge 1$, where $\lfloor x \rfloor$ denotes the integer part of $x$.  Let $\xi$ be the least number of such numbers $A$, which is called Mills' constant (See \cite[pp.~130--131]{Finch}). The author \cite{Saito} recently showed that $\xi$ is irrational. He furthermore proved that either $\xi$ is transcendental, or $\xi^{3^m}$ is a Pisot number of degree $3$ for some positive integer $m$. We recall that a real number $\alpha$ is a \textit{Pisot number} greater than $1$ if it is an algebraic integer such that all the conjugates of $\alpha$, except for $\alpha$ itself, lie in the open unit disk of the complex plane.

Assume that $\xi$ is algebraic. Then,  $\xi^{3^m}(\eqqcolon \beta)$ is a Pisot number of degree $3$ for some $m\ge 1$. By the definition of Mills' constant, $\lfloor \beta^{3^n} \rfloor$ is a prime number for every $n\ge 1$.  Therefore, if we affirmatively solved the following problem, then $\xi$ would be transcendental.

\begin{problem}\label{Problem1}
Let $\beta$ be an arbitrary Pisot number of degree $3$. Prove \textup{(}or disprove\textup{)} that the numbers $\lfloor \beta^{3^n} \rfloor$ are composite for infinitely many integers $n$.  
\end{problem} 

For a given algebraic real number $\alpha$, there are many results on the divisibility of $\lfloor \alpha^n\rfloor$ and its variants. For example, in \cite{FormanShapiro}, Forman and Shapiro observed that $\lfloor (2+\sqrt{2})^n\rfloor$ is composite infinitely often. After that, Cass \cite{Cass} proved that the number of integers $n\in[1,x]$, such that $\lfloor \alpha^n \rfloor$ is a prime number, is bounded above by $C(\log x)^2$ for some computable constant $C>0$ if $\alpha>1$ is a unit in a quadratic field $\mathbb{Q}(D)$ and $\alpha \neq (1+\sqrt{5})/2$, where  $D$ is squarefree. Therefore, for such $\alpha$, there are infinitely many $n$ such that the numbers $\lfloor \alpha^n \rfloor$ are composite.  Dubickas \cite{Dubickas2002} extended a qualitative part of Cass's result. He showed that for every Pisot or Salem number $\alpha$, the integers $\lfloor \alpha^n \rfloor$ are composite for infinitely many integers $n$. Here, we recall that a real number $\alpha$ greater than $1$ is called a \textit{Salem number} if it is a reciprocal algebraic integer of degree at least 4 such that all the conjugates of $\alpha$, except for $\alpha$ itself and $1/\alpha$, are all of modulus $1$. Salem numbers are also of particular interest. Zaimi \cite{Zaimi} showed that for every Salem number $\alpha$ of degree $d$, the numbers $\lfloor \alpha ^n \rfloor$  are divisible by $N$ for infinitely many $n$ if and only if $N\leq 2d-3$. We refer the readers to Novikas' Ph.D. thesis \cite{Novikas} for more details.

For solving Problem~\ref{Problem1}, it seems natural to study the divisibility of $\lfloor \alpha^{R(n)} \rfloor $, where $R(n)$ is an exponentially or more rapidly growing sequence. However, one can guess that the difficulty of Problem~\ref{Problem1} is similar to a problem on the divisibility of the Fermat numbers $2^{2^n}+1$. It is known that $2^{2^n}+1$ is prime for $0\leq n\leq 4$ and is composite for $5\leq n\leq 32$ (See \cite[p.~16]{Guy}). It is a long-standing problem to prove (or disprove) the infinitude of the composite Fermat numbers. Alternatively, we aim to give a class of suitable sequences $(R(n))_{n\ge 1}$ such that the numbers $\lfloor \alpha^{R(n)} \rfloor$ are composite for infinitely many integers $n\ge 1$. \\

 We now prepare several key notions to state the results of this paper. A sequence $(R(n))_{n\geq 1}$ is called an \textit{inhomogeneous linear recurrence sequence} (\textit{ILRS}) \textit{with order} $d\ge 1$ if it is a sequence of integers and there are integers $a_{0}, \ldots, a_{d-1}, b$ with $a_0\neq 0$ such that for every integer $n\geq 1$, we have
\begin{equation}\label{eq:recurrence}
R(n+d)= a_{d-1}R(n+d-1)+a_{d-2}R(n+d-2)+\cdots + a_{0}R(n)+b.
\end{equation}  
In addition, we say that $(R(n))_{\ge 1}$ is \textit{reversible} if $a_0\in \{-1,1\}$.
\begin{remark}
In this paper, the inhomogeneous term $b$ is always a constant. 
\end{remark}

We say that $(R(n))_{n\geq 1}$ is a \textit{linear recurrence sequence} (\textit{LRS}) if $b=0$. In this case, $(R(n))_{\ge 1}$ is called \textit{non-degenerate} if no quotient of two distinct roots of its characteristic polynomial $P(x)= x^{n+d}-a_{d-1}x^{n+d-1}-\cdots - a_1x- a_0$ (which may have multiple roots) is a root of unity.   \\

Let $(R(n))_{\ge 1}$ be an unbounded LRS. Dubickas \cite[Theorem~1.1]{Dubickas2018} obtained the following interesting results:
\begin{enumerate}\renewcommand{\theenumi}{D\arabic{enumi}}
\renewcommand{\labelenumi}{(\theenumi)} 
\item \label{D1} Given a positive integer $H$ there exist a collection of (not necessarily distinct) prime numbers $p_h$, where $h=-H, \ldots, H $, and two positive integers $m$ and $L$ such that for each $h\in \{-H,\ldots,H\}$ and each integer $n\ge 1$, the number $R(Ln+m)+h$ is divisible by $p_h$;
\item \label{D2} If, in addition, $(R(n))_{n\ge 1}$ is non-degenerate then there is a constant $c>0$ depending on $d$, $a_{d-1}$, $\ldots$, $a_0$ only such that the intervals 
\[
(|R(n)|-c\log n , |R(n)|+c\log n  )
\] 
do not contain any prime numbers for infinitely many integers $n\ge 1$.
\end{enumerate} 

As a corollary\footnote{Dubickas applied \eqref{D1} and \eqref{D2} with $R(n)=\mathrm{Tr}(\alpha^n)$ in \cite{Dubickas2018}, where $\mathrm{Tr}(\beta)$ denotes the trace of $\beta$.}, Dubickas further showed that \eqref{D1} and \eqref{D2} are still true even if we replace $R(n)$ with $\lfloor \alpha^n \rfloor$ for any Pisot or Salem number $\alpha$. He gave a stronger result when $\alpha$ is a Pisot number. See \cite[Corollaries~1.3 and 1.4]{Dubickas2018} for more details. Motivated by these results and Problem~\ref{Problem1},   we obtain the following extension of \eqref{D1} and \eqref{D2}.  Here, we define $(T\circ S) (n)\coloneqq T(S(n))$.

\begin{theorem}\label{Theorem:Main1}Let $M$ be a non-negative integer, and let $(R_j(n))_{n\geq 1}$ be an ILRS with order $d_j\ge 1$ for every $j=0,1,\ldots,M$. We suppose that \begin{enumerate}
\item \label{Condition1:Thm1} $(R_j(n))_{n\geq 1}$ is reversible and positive for every $j=1,2,\ldots, M$\textup{;}
\item \label{Condition2:Thm1} $(R_1\circ \cdots \circ R_M)(n)\to \infty$ as $n\to \infty$\textup{;}
\item \label{Condition3:Thm1}$((R_0\circ R_1 \circ \cdots \circ R_M) (n) )_{n\ge 1}$ is unbounded,
\end{enumerate}
where we only suppose \eqref{Condition3:Thm1} when $M=0$. Let $H$ be an arbitrary positive integer. Then, there exists a positive integer $m_0$ such that for each $m\ge m_0$, we find  a collection of \textup{(}not necessarily distinct\textup{)} prime numbers $p_h$, where $h=-H, \ldots, H $, and a positive integer $L$ such that for each $h\in \{-H,\ldots,H\}$ and each $n\ge 0$,  the number 
\[
(R_0\circ R_1\circ \cdots \circ R_M)(Ln+m)+h 
\]
is divisible by $p_h$.
\end{theorem}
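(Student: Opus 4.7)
The strategy is to adapt Dubickas's argument for \eqref{D1} to the iterated composition. Write $f\coloneqq R_1\circ\cdots\circ R_M$ (the identity when $M=0$), so the target expression is $R_0(f(n))+h$. For each $h\in\{-H,\ldots,H\}$, the plan is to produce a single prime $p_h$ that divides this quantity for every $n$ in an arithmetic progression $Ln+m$, and then to coordinate the primes and periods into one pair $(m_0,L)$ that works for all $h$ simultaneously.

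For fixed $m\ge m_0$, set $N_h\coloneqq R_0(f(m))+h$ and, provided $|N_h|\ge 2$, choose $p_h$ to be any prime divisor of $N_h$. The sequence $R_0(n)\pmod{p_h}$ is only \emph{eventually} periodic — with some period $T_h$ and preperiod $k_h$ — since no reversibility is assumed on $R_0$. For each $j\in\{1,\ldots,M\}$, however, reversibility \eqref{Condition1:Thm1} forces the leading coefficient $a_0^{(j)}=\pm 1$, which is a unit modulo every positive integer; inverting \eqref{eq:recurrence} modulo $N$ extends $R_j\pmod N$ to a bi-infinite solution of the recurrence, so $R_j\pmod N$ is \emph{purely} periodic for every positive integer $N$. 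Compositions of sequences with this strong periodicity property again have it, so by induction on $M$ the sequence $f$ is purely periodic modulo every positive integer. Let $L_h$ denote the period of $f\pmod{T_h}$ and set $L\coloneqq\operatorname{lcm}_{|h|\le H}L_h$. Then for every $n\ge 0$,
\[
f(Ln+m)\equiv f(m)\pmod{T_h},
\]
and \eqref{Condition2:Thm1} forces $f(m)\to\infty$, so that taking $m_0$ large enough to satisfy $f(m)\ge\max_{|h|\le H}k_h$ for all $m\ge m_0$ gives $R_0(f(Ln+m))\equiv R_0(f(m))\equiv -h\pmod{p_h}$ as required.

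The main obstacle is choosing $m_0$ so that $|N_h|\ge 2$ uniformly in $h\in\{-H,\ldots,H\}$ and $m\ge m_0$, i.e., so that $|R_0(f(m))|\ge H+2$ for all such $m$. An unbounded ILRS need not tend to infinity in absolute value — its values can drop back towards zero along subsequences — so this step requires more than the bare hypothesis \eqref{Condition3:Thm1}: one must upgrade ``unbounded'' to ``tends to infinity along the image of $f$.'' For $M=0$ this uses a case analysis of the dominant roots of the characteristic polynomial of $R_0$, and for $M\ge 1$ it combines the growth \eqref{Condition2:Thm1} of $f$ with a structural argument about how an ILRS behaves when evaluated along a sequence of integers tending to infinity. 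This uniform-in-$m$ upgrade is precisely the strengthening beyond Dubickas's \eqref{D1}, whose conclusion furnishes only a single suitable $m$; once it is in hand, the periodicity machinery of the previous paragraph carries everything through.
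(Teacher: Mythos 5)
Your route is the paper's route: pick $p_h$ to be a prime factor of $R_0(f(m))+h$, use the reversibility of $R_1,\ldots,R_M$ to make $f$ purely periodic modulo any modulus, push the eventual periodicity of $R_0$ modulo $p_h$ through the composition, and take $L$ to be a common multiple of the resulting periods. That is exactly the content of the paper's Lemmas~\ref{Lemma:Periodic}--\ref{Lemma:Key} and its proof of Theorem~\ref{Theorem:Main1}. However, two steps in your version do not close. First, your choice of $m_0$ is circular as written: you ask for $f(m)\ge\max_{|h|\le H}k_h$ for all $m\ge m_0$, but the preperiods $k_h$ are attached to the primes $p_h$, which are determined only after $m$ (hence after $m_0$) has been chosen, and a priori the preperiod of $R_0\bmod p$ obtained by pigeonhole can be as large as $p^{d_0}$. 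The missing ingredient is a preperiod bound uniform over all primes; this is precisely Lemma~\ref{Lemma:Periodic}\eqref{item:prime} in the paper ($s\le|a_0|^{d_0}$ for prime moduli, since either $p\nmid a_0$ and the sequence is purely periodic, or $p\mid a_0$ and $p^{d_0}\le|a_0|^{d_0}$). With that bound, $m_0$ depends only on $R_0,\ldots,R_M$ and $H$, which is how Lemma~\ref{Lemma:Key} makes the argument non-circular.

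Second, you label the inequality $|R_0(f(m))|\ge H+2$ for all $m\ge m_0$ as ``the main obstacle'' and only sketch a programme (dominant-root case analysis, a structural argument along the image of $f$) without executing it, so the proposal is incomplete at exactly the step you identify as the content beyond \eqref{D1}. The paper spends no effort here: it writes that by \eqref{Condition3:Thm1} one ``may suppose'' $|(R_0\circ U)(m)|-H\ge 2$ and moves on. Your observation that unboundedness alone does not literally force this for every large $m$ is fair, but since you neither supply the missing growth argument nor adopt the paper's direct appeal to \eqref{Condition3:Thm1}, this step remains a gap in your write-up. (One further small point: your composition step ``$R_j\bmod N$ purely periodic for all $j$ implies $f$ purely periodic'' also quietly needs the positivity in \eqref{Condition1:Thm1}, so that the inner values stay in the range where pure periodicity applies; the paper uses this explicitly in Lemma~\ref{Lemma:Iterated-periodic}.)
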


\begin{theorem}\label{Theorem:Main2}Let $M$ be a non-negative integer, and let $(R_j(n))_{n\geq 1}$ be an ILRS with order $d_j\ge 1$ for every $j=0,1,\ldots,M$. We suppose that \eqref{Condition1:Thm1}, \eqref{Condition2:Thm1}, and
\begin{enumerate}\setcounter{enumi}{2}
\renewcommand{\theenumi}{\arabic{enumi}'}
\renewcommand{\labelenumi}{(\theenumi)}
\item \label{Condition3:Thm2}$|(R_0\circ R_1 \circ\cdots \circ R_M)(n)|/\log n \to \infty$ as $n\to \infty$,
\end{enumerate}
where we only suppose \eqref{Condition3:Thm2}  when $M=0$. Then, the intervals 
\[
\biggl(|(R_0\circ R_1\circ  \cdots \circ R_M)(n)|- \delta(n) ,\ |(R_0\circ R_1 \circ \cdots \circ R_M)(n)|+ \delta(n)    \biggr)
\] 
do not contain any prime numbers for infinitely many integers $n\ge 1$, where 
\begin{equation}\label{eq:delta}
\delta(n)= \frac{\log n}{2d_0\cdots d_M} +O\left(\frac{\log n}{d_0\cdots d_M} \exp\left(-C\sqrt{\log\log n} \right)    \right)
\end{equation}
for every sufficiently large $n\ge 1$ and $C$ is an absolute positive constant.
\end{theorem}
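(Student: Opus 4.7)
The plan is to deduce Theorem~\ref{Theorem:Main2} from Theorem~\ref{Theorem:Main1}. Write $f\coloneqq R_0\circ R_1\circ \cdots \circ R_M$. I would apply Theorem~\ref{Theorem:Main1} with an auxiliary parameter $H=H(n)$ chosen to grow slowly with $n$, take $N=Ln+m$ (with $L,m$ supplied by that theorem), and show that every integer lying in the interval of half-length $\delta(N)$ around $|f(N)|$ is composite. Concretely, Theorem~\ref{Theorem:Main1} supplies primes $p_h$ with $p_h\mid f(N)+h$ for every $|h|\le H$; each such integer is then composite once $|f(N)+h|>p_h$. The integers in the open interval $(|f(N)|-H,\,|f(N)|+H)$ are exactly the numbers $|f(N)|+h$ with $|h|\le H-1$. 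For $f(N)>0$ these equal $f(N)+h$, divisible by $p_h$; for $f(N)<0$ they equal $-(f(N)-h)=-(f(N)+(-h))$, divisible by $p_{-h}$. In either case the relevant prime is bounded by $\max_{|h|\le H}p_h$, so it suffices to arrange $\max_{|h|\le H}p_h < |f(N)|-H$.

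The analytic heart of the proof is to track the sizes of $L$ and of the $p_h$ produced by Theorem~\ref{Theorem:Main1}. I would show that $n\mapsto f(n)\bmod p$ is eventually periodic with period bounded by $p^{d_0 d_1\cdots d_M}$: this follows by iterating the standard bound that, for each reversible ILRS $R_j$ of order $d_j$ (for $j\ge 1$), the sequence $R_j\bmod N$ is purely periodic with period dividing $N^{d_j}-1$, combined with the eventual periodicity of $R_0\bmod p$. Taking $L$ to be the least common multiple of these periods over $|h|\le H$ gives $\log L\le (2H+1)(d_0\cdots d_M)\log\max_h p_h$. Coupled with a prime-number-theorem type bound $\max_h p_h\ll H\log H$ for the primes constructed inside Theorem~\ref{Theorem:Main1}, with a Vinogradov--Korobov-type saving in the error, this yields $\log L\le 2H(d_0\cdots d_M)(\log H)(1+o(1))$.

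Using hypothesis~\eqref{Condition3:Thm2}, $|f(N)|/\log N\to\infty$, while $\log N=\log L+O(1)$ as $n\to\infty$. Choosing $H$ as large as possible subject to $\max_h p_h + H < |f(N)|$ therefore gives $H\le (\log N)/(2 d_0\cdots d_M)\cdot(1+o(1))$, with an error of the shape displayed in~\eqref{eq:delta}. Letting $n$ range over the positive integers produces infinitely many $N$ for which the interval $(|f(N)|-\delta(N),\,|f(N)|+\delta(N))$ contains no primes.

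The main obstacle is the quantitative step: one must extract from the proof of Theorem~\ref{Theorem:Main1} an explicit upper bound on $\max_h p_h$ in terms of $H$, and verify that iterating the composition inflates the period exponent by no more than $d_0 d_1\cdots d_M$ (this is where condition~\eqref{Condition1:Thm1}, forcing the periods of the inner sequences to be well-controlled, enters decisively). The precise error $\exp(-C\sqrt{\log\log n})$ will require an input of Vinogradov--Korobov quality on the distribution of primes; once this is in hand, the remainder is a routine asymptotic balancing of $H$ against $\log N$.
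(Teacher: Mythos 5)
Your overall strategy is the paper's: re-run the machinery behind Theorem~\ref{Theorem:Main1} with quantitative control (the period of $(R_0\circ\cdots\circ R_M)(n)\bmod p$ is at most $p^{D}$ with $D=d_0\cdots d_M$), then balance $H$ against $\log N$ via the prime number theorem. But there is a genuine quantitative gap in your accounting for $L$, and it is exactly the point where the paper's proof has its one real idea. Your bound $\log L\le (2H+1)D\log\max_h p_h$ treats the $2H+1$ periods term by term and hence yields $\log L\le 2HD(\log H)(1+o(1))$. Since $\log N\ge \log L$ is forced, this only gives $H\gtrsim \frac{\log N}{2D\log\log N}$, i.e.\ a prime-free interval shorter than the claimed one by a factor of $\log\log N$. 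The assertion that this ``therefore gives $H\le(\log N)/(2D)\cdot(1+o(1))$'' does not follow from your estimate (and the inequality you want is in any case a lower bound on $H$ in terms of $\log N$, not an upper bound).

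The missing idea is to make the collection $\{p_h:|h|\le H\}$ consist of \emph{few distinct} primes and to bound $L$ by a product over that set rather than over all $h$. The paper does this by taking $p_h$ to be the \emph{smallest} prime factor of $f(m)+h$ and choosing $H=|f(m)|-2$, so that $\{f(m)+h:|h|\le H\}=\{2,3,\ldots,2H+2\}$ and consequently $\{p_h:|h|\le H\}$ is precisely the set of all primes $p\le 2H+2$. Then
\[
L=\prod_{p\in\{p_h:\,|h|\le H\}}L(p)\le \prod_{p\le 2H+2}p^{D}=e^{D\vartheta(2H+2)}=e^{2HD(1+o(1))},
\]
which recovers the factor $\log H$ you lose and gives $H=\frac{\log N}{2D}+O\bigl(\frac{\log N}{D}\exp(-C\sqrt{\log\log N})\bigr)$ after setting $N=Ln+m$ with $n=\lceil e^{D\vartheta(2H+2)}/L\rceil$. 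Two smaller points: your bound $\max_h p_h\ll H\log H$ is unjustified as stated (without the smallest-prime-factor choice and $H\approx|f(m)|$, a prime divisor of $f(m)+h$ can be as large as $|f(m)|+H$), though once the above is in place one simply has $p_h\le 2H+2$; and no Vinogradov--Korobov input is needed --- the classical error term $\vartheta(X)=X+O(X\exp(-C\sqrt{\log X}))$ already produces the $\exp(-C\sqrt{\log\log n})$ saving in \eqref{eq:delta} after substituting $X\asymp\log N$.
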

\begin{remark}
We do not have to suppose the reversibility of $R_0(n)$ in these theorems. 
\end{remark}

Theorems~\ref{Theorem:Main1} and \ref{Theorem:Main2} with $M=0$ recover \eqref{D1} and \eqref{D2}, respectively. Indeed, it is clear that Theorem~\ref{Theorem:Main1} with $M=0$ implies \eqref{D1}. In addition, if $R$ is a non-degenerate and unbounded LRS, then in \cite[Proof of Theorem~1.1]{Dubickas2018}, Dubickas showed that there exists $A=A(R)>0$ such that
\[
|R(n)| \ge A n
\]
for every sufficiently large $n\ge 1$, which leads to \eqref{Condition3:Thm2} with $M=0$. Thus, Theorem~\ref{Theorem:Main2} with $M=0$ implies \eqref{D2}.

Dubickas \cite{Dubickas2018} did not explicitly give the constant $c$ in \eqref{D2}, but we find the explicit form of it from \eqref{eq:delta}, that is
\[
c= \frac{1}{2d_0\cdots d_M}-\epsilon.
\]
For example, let $(F(n))_{n\ge 1}$ be the Fibonacci sequence. Since $(F(n))_{n\ge 1}$ is reversible, positive, strictly increasing,  and has order $2$, Theorem~\ref{Theorem:Main2} implies that the intervals 
\[
\biggl( (F\circ F \circ F) (n)- 0.0624\log n,\   (F\circ F \circ F) (n)+ 0.0624\log n\biggr) 
\]
do not contain any prime numbers for infinitely many integers $n\ge 1$.

We note that the best known function $\delta(n)$, such that the intervals $(n-\delta(n), n +\delta(n) )$ do not contain any prime numbers for infinitely many $n$, is 
\[
\frac{c (\log n) (\log \log n )(\log\log\log \log n)}{\log\log \log n}. 
\]
This bound was given by Ford, Green, Konyagin, Maynard, and Tao \cite{FGKMT}.

By applying Theorem~\ref{Theorem:Main1},  we also obtain the following result on the divisibility of elements near the numbers $\lfloor \alpha^{(R_1\circ \cdots \circ R_M)(n)} \rfloor$.
\begin{corollary}\label{Corollary1}
Let $M$ be a fixed positive integer. For every $j=1,2,\ldots, M$, let $(R_j(n))_{n\geq 1}$ be a positive, reversible, and strictly increasing  ILRS. Let $\alpha$ be a Pisot or Salem number of degree $d_0\ge 1$. Let $H$ be an arbitrary positive integer. Then, there exists a positive integer $m_0$ such that for each $m\ge m_0$, we find a positive integer $L$ such that the numbers 
\[
\lfloor \alpha^{(R_1\circ \cdots \circ R_M)(Ln+m)} \rfloor+h 
\]
are composite for all integers $|h|\leq H$ and $n\ge 1$. Furthermore, the intervals
\[
\biggl(\lfloor \alpha^{(R_1\circ \cdots \circ R_M)(n)} \rfloor-\delta(n),  \lfloor \alpha^{(R_1\circ \cdots \circ R_M)(n)} \rfloor+\delta(n)  \biggr)
\]
do not contain any prime numbers for infnitely many $n\ge 1$, where $\delta(n)$ is a function satisfying \eqref{eq:delta}. 
\end{corollary}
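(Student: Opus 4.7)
The plan is to reduce the Corollary to Theorems~\ref{Theorem:Main1} and \ref{Theorem:Main2} by choosing $R_0$ to be the trace LRS associated to $\alpha$. Let $\alpha=\alpha_1,\alpha_2,\ldots,\alpha_{d_0}$ denote the Galois conjugates of $\alpha$, and set
\[
R_0(n):=\Tr(\alpha^n)=\sum_{j=1}^{d_0}\alpha_j^n.
\]
Each $R_0(n)$ is a rational integer, and $(R_0(n))_{n\ge 1}$ is an LRS of order $d_0$ whose characteristic polynomial coincides with the minimal polynomial of $\alpha$. Writing $N_n:=(R_1\circ\cdots\circ R_M)(n)$, the Pisot/Salem property $|\alpha_j|\le 1$ for $j\ge 2$ gives $|R_0(N_n)-\alpha^{N_n}|\le d_0-1$, and hence $\lfloor\alpha^{N_n}\rfloor=R_0(N_n)+e_n$ for some integer $e_n$ with $|e_n|\le d_0$. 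Hypotheses \eqref{Condition1:Thm1} and \eqref{Condition2:Thm1} are immediate from the assumptions on $R_1,\ldots,R_M$; for \eqref{Condition3:Thm2} (which implies \eqref{Condition3:Thm1}), each $R_j$ ($j\ge 1$) being positive and strictly increasing forces $N_n\ge n$, so $|R_0(N_n)|\ge\alpha^n-(d_0-1)$ grows exponentially in $n$.

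For the compositeness statement, I would apply Theorem~\ref{Theorem:Main1} with $H$ replaced by $H':=H+d_0$. This gives an $m_0$ and, for each $m\ge m_0$, a positive integer $L$ and primes $p_{h'}$ ($|h'|\le H'$) with $p_{h'}\mid R_0(N_{Ln+m})+h'$ for every $n\ge 0$. For each $|h|\le H$ and $n\ge 1$,
\[
\lfloor\alpha^{N_{Ln+m}}\rfloor+h = R_0(N_{Ln+m})+(h+e_{Ln+m}),
\]
and $|h+e_{Ln+m}|\le H+d_0=H'$, so the left side is divisible by $p_{h+e_{Ln+m}}$. Enlarging $m_0$ if necessary so that $\lfloor\alpha^{N_{L+m}}\rfloor-H$ exceeds $\max_{|h'|\le H'}p_{h'}$ guarantees that the divisor is proper, and the integer is composite.

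For the prime-free interval statement, I would apply Theorem~\ref{Theorem:Main2} to the same $R_0,R_1,\ldots,R_M$ to obtain a function $\tilde\delta(n)$ satisfying \eqref{eq:delta} such that $(|R_0(N_n)|-\tilde\delta(n),\,|R_0(N_n)|+\tilde\delta(n))$ contains no primes for infinitely many $n$. Since $\bigl|\,|R_0(N_n)|-\lfloor\alpha^{N_n}\rfloor\,\bigr|\le d_0$, setting $\delta(n):=\tilde\delta(n)-d_0$ makes the interval $(\lfloor\alpha^{N_n}\rfloor-\delta(n),\,\lfloor\alpha^{N_n}\rfloor+\delta(n))$ a subset of the one above, and hence prime-free for infinitely many $n$. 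The constant $d_0$ is absorbed into the $O$-term of \eqref{eq:delta}, so $\delta$ also satisfies \eqref{eq:delta}. The main obstacle is that $\lfloor\alpha^{N}\rfloor$ is not itself a value of an LRS, and is resolved by the Pisot/Salem bound $|\alpha^N-\Tr(\alpha^N)|\le d_0-1$, which lets a bounded correction be absorbed into $H$ in Theorem~\ref{Theorem:Main1} and into the error term of $\delta$ in Theorem~\ref{Theorem:Main2}.
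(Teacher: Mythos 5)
Your proposal is correct and follows essentially the same route as the paper: replace $\lfloor\alpha^{N}\rfloor$ by the trace sequence $R_0(N)=\Tr(\alpha^N)$ (the paper's Lemma~\ref{Lemma:Trace}), use the Pisot/Salem bound to make the discrepancy a bounded integer, verify \eqref{Condition3:Thm1} and \eqref{Condition3:Thm2}, and invoke Theorems~\ref{Theorem:Main1} and \ref{Theorem:Main2}. Your write-up is in fact slightly more explicit than the paper's about absorbing the bounded correction into the enlarged $H$ and into the error term of $\delta$.
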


For example, Corollary~\ref{Corollary1} implies that for every Pisot or Salem number $\alpha$ of degree $d$ and for every $\epsilon>0$, the intervals 
\[
\left(\lfloor \alpha^{(F\circ F)(n)} \rfloor-\left(\frac{1}{8d}-\epsilon\right)\log n, \   \lfloor \alpha^{(F\circ F)(n)} \rfloor+ \left(\frac{1}{8d}-\epsilon\right)\log n  \right)
\]
do not contain any prime numbers for infinitely many $n\ge 1$. In particular,  the numbers $\lfloor \alpha^{F(F(n))} \rfloor$  are composite for infinitely many $n\ge 1$.  

We remark that the geometric sequence $(a^n)_{n\geq 1}$ is a LRS for every $a\geq 2$, but it is not reversible. Therefore, Problem~\ref{Problem1} remains unsolved. Moreover, we do not understand the divisibility of the Fermat numbers, either. The reversibility is a strong assumption because we can smoothly transfer the discussion of $R(n)$ in \cite{Dubickas2018} to $(R_0\circ R_1 \circ \cdots \circ R_M) (n)$. We desire to remove the reversibility to achieve our goal, and so we propose the following problems.

\begin{problem}
Find a non-reversible ILRS $(R(n))_{n\geq 1}$ such that for every Pisot number $\alpha$, especially of degree $3$, the numbers $\lfloor \alpha^{R(n)} \rfloor$ are composite for infinitely many $n$.
\end{problem}

\begin{problem}
Let $(F(n))_{n\ge 1}$ be the Fibonacci sequence. Let $h$ be an arbitrary integer. Prove or disprove that 
the numbers $F(2^n)+h$ are composite for infinitely many $n$.
\end{problem}

\begin{notation}

For all integers $N$ and $q\geq 1$, there uniquely exists a pair $(u,r)$ of integers such that $N=qu+r$ and $0\leq r<q$. Then, we define $N \mod q=r$. For every $x\in \mathbb{R}$, let $\lceil x \rceil$ denote the smallest integer $n$ such that $x\leq n$, and we define $\{x\}=x-\lfloor x\rfloor$. Let $p$ denote a parameter running over the set of prime numbers.   

We say that $f(x)=g(x)+O(h(x))$ for all $x\geq x_0$ if there exists $C>0$ such that $|f(x)-g(x)|\leq Ch(x)$  for all $x\geq x_0$. 
\end{notation}
\section{Auxiliary results}

We say that a sequence $(s(n))_{n\ge n_0}$ is \textit{purely $L$-periodic} if $s(n)=s(n+L)$ for every integer $n\ge n_0$. 

\begin{lemma}\label{Lemma:Periodic}Let $(R(n))_{n\geq 1}$ be an ILRS with order $d\ge 1$. Let $a_0$ be as in \eqref{eq:recurrence}. Let $q$ be an integer greater than or equal to $2$. Then, there are two integers $s$ and $L$ such that 
\begin{equation}\label{relation:Range}
s, L\in [1,  q^{d}]
\end{equation}
 and $(R(n) \mod  q)_{n\ge s}$ is purely $L$-periodic. In addition, 
 \begin{enumerate}\renewcommand{\theenumi}{\alph{enumi}}
\renewcommand{\labelenumi}{(\theenumi)}
\item \label{item:coprime}if $a_0$ and $q$ are coprime, then $s=1$\textup{;} 
\item \label{item:prime}if $q$ is a prime number, then $s\leq  |a_0|^{d}$.
\end{enumerate}
 \end{lemma}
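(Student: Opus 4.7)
The plan is a pigeonhole argument on the ``state vectors'' of the recurrence, mirroring the classical proof that any linear recurrence over a finite ring is eventually periodic. I would introduce the tuples
\[
T_n \coloneqq (R(n),R(n+1),\ldots,R(n+d-1)) \mod q
\]
for $n\ge 1$, which take values in a set of size $q^{d}$. Applying pigeonhole to the $q^{d}+1$ tuples $T_1,T_2,\ldots,T_{q^{d}+1}$ produces indices $1\le n_1<n_2\le q^{d}+1$ with $T_{n_1}=T_{n_2}$. Setting $s\coloneqq n_1$ and $L\coloneqq n_2-n_1$ immediately gives $s\in[1,q^{d}]$ and $L\in[1,q^{d}]$, matching the range claimed in \eqref{relation:Range}.

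To upgrade the single equality $T_s = T_{s+L}$ into pure $L$-periodicity of $(R(n)\mod q)_{n\ge s}$, I would iterate \eqref{eq:recurrence} forwards: since $R(n+d)\mod q$ is determined by $R(n),\ldots,R(n+d-1)\mod q$ together with the fixed coefficients $a_0,\ldots,a_{d-1},b$, a one-line induction on $k\ge 0$ yields $T_{s+k}=T_{s+k+L}$, and hence $R(s+k)\equiv R(s+k+L)\pmod{q}$ for every $k\ge 0$.

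For part (a), when $\gcd(a_0,q)=1$ the coefficient $a_0$ is invertible modulo $q$, and the recurrence can be solved for the earliest index as
\[
R(n) \equiv a_0^{-1}\bigl(R(n+d) - a_{d-1}R(n+d-1) - \cdots - a_1 R(n+1) - b\bigr) \pmod{q}.
\]
The analogous induction run in the reverse direction shows $T_{s-k} = T_{s-k+L}$ for every $k$ with $s-k\ge 1$, so we may replace $s$ by $1$. Part (b) then reduces to a short case split on the prime $q$: either $q\nmid a_0$, in which case part (a) gives $s=1\le|a_0|^d$; or $q\mid a_0$, in which case $|a_0|\ge q$ and the already established bound $s\le q^d$ forces $s\le|a_0|^d$.

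There is essentially no obstacle here beyond careful bookkeeping; the argument is a clean combination of pigeonhole with invertibility of the trailing coefficient. The only step requiring a moment of attention is verifying that the collision occurs among the first $q^{d}+1$ tuples, so that both $s$ and $L$ land in $[1,q^d]$ rather than the slightly larger interval $[1,q^d+1]$.
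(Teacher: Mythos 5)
Your proposal is correct and follows essentially the same route as the paper's own proof: pigeonhole on the state vectors to get the collision within the first $q^d+1$ indices, forward induction via \eqref{eq:recurrence} for pure periodicity, backward solving using the invertibility of $a_0$ modulo $q$ for part (a), and the same case split (using $a_0\neq 0$, so $q\mid a_0$ forces $|a_0|\ge q$) for part (b). No issues.
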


\begin{proof}
We observe that
\[
( R(n+d-1) \mod q, R(n+d-2) \mod q, \ldots, R(n) \mod q ) \in \{0,1,\ldots, q-1\}^d
\]  
 for every $1\leq n\leq q^d+1$. Therefore, by the pigeonhole principle, there exists a pair $(s,t)$ of integers with $1\leq s<t\leq q^d+1$ such that
\begin{equation}\label{equation:system}
\left\{ \,
    \begin{aligned}
    R(t+d-1)&\equiv R(s+d-1) \mod q,  \\
R(t+d-2)&\equiv R(s+d-2) \mod q,\\  
&\hspace{10pt} \vdots\\
R(t)  &\equiv R(s) \mod q.
    \end{aligned}
\right.
\end{equation}
Let $L=t-s$. Then, $L$ and $s$ satisfy \eqref{relation:Range}. In addition, combining \eqref{eq:recurrence} and \eqref{equation:system} 
\[
R(n)\equiv R(n+L) \mod q 
\]
for all $n\geq s$. 

It remains to show \eqref{item:coprime} and \eqref{item:prime}. For \eqref{item:coprime}, we now assume that $a_0$ and $q$ are coprime. Then, there exists $u\in \{1,2,\ldots,q-1\}$ such that $a_0 u \equiv 1 \mod q$.  By \eqref{eq:recurrence}, we obtain 
\begin{equation}\label{eq:recurrence-modq}
\begin{aligned}
R(n) &\equiv u R(n+d)-ua_{d-1} R(n+d-1)\\
&\hspace{20pt}-ua_{d-2}R(n+d-2)-\cdots-ua_1 R(n+1)-ub\quad \mod q
\end{aligned}
\end{equation}
for all $n\geq 1$. Since $R(n)\equiv R(n+L) \mod q$ for all $ n\geq s$, the congruence \eqref{eq:recurrence-modq} implies that $R(n)\equiv R(n+L) \mod q$ for every $n\geq 1$. Therefore, we obtain  \eqref{item:coprime}.

For  \eqref{item:prime}, we assume that $q$ is a prime number. If $a_0$ and $q$ are coprime, then \eqref{item:prime} is trivial from $\eqref{item:coprime}$. If $a_0$ and $q$ are not coprime, then $q\mid a_0$, which implies that 
\[
s\leq q^{d}\leq  |a_0|^{d}. \qedhere
\]
\end{proof}

\begin{lemma}\label{Lemma:Iterated-periodic}
Let $M$ be a positive integer. 
For every $j=1,2,\ldots,M$, let $(R_j(n))_{n\geq 1}$ be a reversible and positive ILRS with order $d_j\ge 1$. For every positive integer $q\ge 2$, there exists $L\in \mathbb{Z}$ such that 
\begin{equation}
1\leq L \leq  q^{d_1\cdots d_{M} }  
\end{equation}
and the sequence $((R_1\circ \cdots \circ R_M) (n) \mod q)_{n\geq 1}$ is purely $L$-periodic.
\end{lemma}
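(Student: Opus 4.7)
The plan is to prove this by induction on $M$, using Lemma~\ref{Lemma:Periodic} as the workhorse together with the key observation that reversibility ($a_0 = \pm 1$) makes $a_0$ automatically coprime to any modulus $q \geq 2$, so part \eqref{item:coprime} of that lemma applies and yields pure periodicity starting from $n = 1$.

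For the base case $M = 1$, I apply Lemma~\ref{Lemma:Periodic}\eqref{item:coprime} directly to $(R_1(n))_{n\ge1}$ with modulus $q$: since $R_1$ is reversible, $\gcd(a_0, q) = 1$, so there exists $L \in [1, q^{d_1}]$ with $(R_1(n) \bmod q)_{n \ge 1}$ purely $L$-periodic. For the inductive step, assume the statement holds for $M - 1$ reversible positive ILRS. First, apply Lemma~\ref{Lemma:Periodic}\eqref{item:coprime} to $R_1$ with modulus $q$ to obtain an integer $L_1 \in [1, q^{d_1}]$ such that $(R_1(n) \bmod q)_{n \ge 1}$ is purely $L_1$-periodic. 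Next, apply the inductive hypothesis to the tuple $(R_2, \ldots, R_M)$ with modulus $L_1$ in place of $q$, which produces an integer $L \in [1, L_1^{d_2 \cdots d_M}] \subseteq [1, q^{d_1 d_2 \cdots d_M}]$ such that $((R_2 \circ \cdots \circ R_M)(n) \bmod L_1)_{n \ge 1}$ is purely $L$-periodic. I then claim this same $L$ works as the desired period for $(R_1 \circ \cdots \circ R_M)(n) \bmod q$.

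To verify the claim, fix $n \ge 1$ and set $k = (R_2 \circ \cdots \circ R_M)(n)$ and $k' = (R_2 \circ \cdots \circ R_M)(n + L)$. The positivity of $R_2, \ldots, R_M$ guarantees that $k, k' \ge 1$ are positive integers, so $R_1(k)$ and $R_1(k')$ are defined. By the inductive step, $k' \equiv k \pmod{L_1}$, and since $(R_1(m) \bmod q)_{m \ge 1}$ is purely $L_1$-periodic, iterating the periodicity relation gives $R_1(k') \equiv R_1(k) \pmod{q}$, i.e. $(R_1 \circ \cdots \circ R_M)(n + L) \equiv (R_1 \circ \cdots \circ R_M)(n) \pmod{q}$, as required.

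The only genuinely delicate point — and where each hypothesis is used — is ensuring that pure periodicity from $n = 1$ propagates through the composition: if $R_1$ were only eventually periodic mod $q$ (starting from some $s > 1$), then congruence of $k$ and $k'$ modulo $L_1$ would not suffice, because one of them could fall below $s$. Reversibility of each $R_j$ removes this obstruction via part \eqref{item:coprime} of Lemma~\ref{Lemma:Periodic}, while positivity of $R_2, \ldots, R_M$ ensures that the inner values $(R_j \circ \cdots \circ R_M)(n)$ are legitimate indices ($\ge 1$) at which the next ILRS is evaluated. The bound $L \le q^{d_1 \cdots d_M}$ then follows telescopically from the bound at each stage of the induction.
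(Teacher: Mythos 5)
Your proof is correct and follows essentially the same route as the paper's: induction on $M$, using reversibility to invoke Lemma~\ref{Lemma:Periodic}\eqref{item:coprime} for pure periodicity of $R_1$ modulo $q$ with some period $L_1\le q^{d_1}$, then applying the inductive hypothesis to the inner composition with modulus $L_1$ and propagating the congruence through $R_1$. Your explicit remarks on why pure (rather than eventual) periodicity and positivity of the inner sequences are needed match the paper's implicit use of these hypotheses.
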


\begin{proof}
We show the lemma by induction on $M$. If $M=1$, then the lemma follows from Lemma~\ref{Lemma:Periodic} and the reversibility of $(R_1(n))_{n\geq 1}$. Suppose the lemma is true for $M=N-1$ for some $N\geq 2$. Then, let 
\[
U(n)=(R_2\circ \cdots \circ R_N)(n) 
\]
for every $n\geq 1$. Since $(R_1(n))_{n\geq 1}$ is reversible, Lemma~\ref{Lemma:Periodic} with $(R(n))_{n\geq 1}\coloneqq (R_1(n))_{n\geq 1}$ implies that there exists an integer $Q$ such that 
\[
1\leq Q \leq  q^{d_1} 
\]
and  $(R_1(n) \mod q)_{n\geq 1}$ is purely $Q$-periodic. By applying the inductive hypothesis to $U(n)$, there exists an integer $L$ such that 
\[
1\leq L \leq Q^{d_2\cdots d_M}\leq q^{d_1\cdots d_M} 
\] 
and the sequence $( U(n) \mod Q)_{n\geq 1}$ is purely $L$-periodic. Therefore, for every $n\geq 1$,  there exists an integer $j$ (which is not neseccerily positive), we have 
\[
U(n+L)= U(n)  + jQ.
\]
Since   $(R_1(n) \mod q)_{n\geq 1}$ is purely $Q$-periodic, we obtain 
\[
(R_1\circ U)(n+L)=R_1( U(n)+j Q)\equiv  (R_1\circ U) (n)   \mod q 
\]
for every $n\geq 1$. 
\end{proof}

\begin{lemma}\label{Lemma:Key}
Let $(R_0(n))_{n\geq 1}$ be an ILRS with order $d_0\ge 1$. Let $M$ and $(R_j(n))_{n\geq 1}$  $(j=1,2,\ldots ,M)$ be as in Lemma~\ref{Lemma:Iterated-periodic}. Suppose that $(R_1\circ \cdots \circ R_M)(n)\to \infty$ as $n\to \infty$. Then, there exists $m\ge 1$ depending only on $R_0,R_1, \ldots, R_M$ such that for every prime number $p$, we find an integer $L$ such that 
\[
1\leq L\leq p^{d_0d_1\cdots d_M}
\]
and the sequence $((R_0\circ R_1\circ \cdots \circ R_M) (n) \mod p)_{n\geq m}$ is purely $L$-periodic.  
\end{lemma}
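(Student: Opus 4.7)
The proof should combine Lemma~\ref{Lemma:Periodic} (applied to $R_0$) with Lemma~\ref{Lemma:Iterated-periodic} (applied to the inner composition $R_1 \circ \cdots \circ R_M$) by using the eventual period of $R_0 \bmod p$ as the \emph{modulus} for the inner application. First I would invoke Lemma~\ref{Lemma:Periodic} with $q = p$ and $R = R_0$ to produce integers $s_0$ and $L_0$ with $s_0 \le |a_0|^{d_0}$ (via part \eqref{item:prime}, using that $p$ is prime) and $1 \le L_0 \le p^{d_0}$ such that $(R_0(n) \mod p)_{n \ge s_0}$ is purely $L_0$-periodic, where $a_0$ denotes the constant coefficient in the recurrence for $R_0$. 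The decisive point is that $s_0$ is bounded independently of $p$.

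Next, assuming $L_0 \ge 2$, I would apply Lemma~\ref{Lemma:Iterated-periodic} to $R_1, \ldots, R_M$ with modulus $q = L_0$, obtaining a period $L$ satisfying
\[
1 \le L \le L_0^{d_1 \cdots d_M} \le p^{d_0 d_1 \cdots d_M},
\]
such that $((R_1 \circ \cdots \circ R_M)(n) \mod L_0)_{n \ge 1}$ is purely $L$-periodic. The degenerate case $L_0 = 1$ is handled trivially: then $R_0 \mod p$ is constant on $[s_0,\infty)$, and the conclusion holds with $L = 1$ as soon as the inner composition exceeds $s_0$.

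Using the hypothesis $(R_1 \circ \cdots \circ R_M)(n) \to \infty$, I would then fix an integer $m \ge 1$, depending only on $R_0, R_1, \ldots, R_M$ (through the $p$-independent quantity $s_0$), such that $(R_1 \circ \cdots \circ R_M)(n) \ge s_0$ for every $n \ge m$. For any such $n$, the inner periodicity gives
\[
(R_1 \circ \cdots \circ R_M)(n + L) = (R_1 \circ \cdots \circ R_M)(n) + j L_0
\]
for some $j \in \mathbb{Z}$, with both arguments at least $s_0$. Applying $R_0$ to both sides and using that $R_0 \mod p$ is $L_0$-periodic on $[s_0, \infty)$ yields
\[
(R_0 \circ R_1 \circ \cdots \circ R_M)(n + L) \equiv (R_0 \circ R_1 \circ \cdots \circ R_M)(n) \mod p,
\]
which is precisely the required $L$-periodicity.

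The main subtle point, and the reason part \eqref{item:prime} of Lemma~\ref{Lemma:Periodic} was stated so carefully, is guaranteeing that the threshold $m$ can be selected \emph{uniformly in} $p$; this rests entirely on the $p$-free bound $s_0 \le |a_0|^{d_0}$. Once that uniformity is secured, the estimate $L \le p^{d_0 d_1 \cdots d_M}$ falls out automatically as the product of the two period bounds, and no further iteration is required.
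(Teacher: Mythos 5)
Your proposal is correct and follows essentially the same route as the paper: apply Lemma~\ref{Lemma:Periodic} to $R_0$ modulo $p$, use the resulting period $Q$ (your $L_0$) as the modulus in Lemma~\ref{Lemma:Iterated-periodic} for the inner composition, and choose $m$ uniformly in $p$ via the $p$-free bound $s \le |a_0|^{d_0}$ from part \eqref{item:prime}. Your explicit treatment of the degenerate case $L_0 = 1$ (which Lemma~\ref{Lemma:Iterated-periodic} formally excludes since it requires $q \ge 2$) is a small point the paper passes over silently, but otherwise the two arguments coincide.
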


\begin{proof}
Let $U(n)=(R_1\circ \cdots \circ R_M)(n)$ for every positive integer $n$. Let $a_0$ be as in \eqref{eq:recurrence} for $R\coloneqq R_0$. By applying Lemma~\ref{Lemma:Periodic} with $R\coloneqq R_0$, $a_0\coloneqq a_0$, and $q\coloneqq p$, there are two integers $s$ and $Q$ such that 
\[
 1\le s\le  |a_0|^{d_0}, \quad 1\le Q\le   p^{d_0},
\]
and the sequence $(R_0(n) \mod p)_{n\geq s}$ is purely $Q$-periodic. By the supposition on $U(n)$, there exists $m=m(R_0,\ldots, R_M)>0$ such that $U(n)\ge s$ for all $n\ge m$. Further, by Lemma~\ref{Lemma:Iterated-periodic}, there exists an integer $L$ such that 
\[
1\leq L \leq Q^{d_1\cdots d_M}\leq p^{d_0d_1\cdots d_M}  
\]
and $(U(n) \mod Q)_{n\ge m}$ is purely $L$-periodic. Therefore, for every $n\geq m$, we find an integer $j$ (which is not necessarily positive) such that $U(n+L) = U(n) + jQ$, and hence   $\min(U(n+L), U(n))\geq s$ leads to
\[
(R_0 \circ U)(n+L) =R_0 (U(n+L) ) = R_0 (U(n)+ j Q  )\equiv (R_0 \circ U) (n) \mod p
\]
for every $n\geq m$.
\end{proof}

\begin{lemma}\label{Lemma:Trace} Let $\alpha$ be a non-zero algebraic integer of degree $d\ge 1$. Let $\alpha_1, \alpha_2, \ldots, \alpha_{d}$ be all the conjugates of $\alpha$. Let $R(n)=\alpha_1^{n}+\cdots+\alpha_{d}^{n}$ for every $n\geq 1$. Then, $(R(n))_{n\geq 1}$ is an ILRS.
\end{lemma}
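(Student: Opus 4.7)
The plan is to use the minimal polynomial of $\alpha$ directly as the characteristic polynomial of the recurrence. Let $P(x) = x^d - a_{d-1}x^{d-1} - \cdots - a_1 x - a_0 \in \mathbb{Z}[x]$ be the minimal polynomial of $\alpha$ over $\mathbb{Q}$; since $\alpha$ is an algebraic integer, $P$ is monic with integer coefficients, and $\alpha_1, \ldots, \alpha_d$ are precisely the roots of $P$. Because $\alpha \neq 0$, the constant term $P(0) = (-1)^d \alpha_1 \cdots \alpha_d$ is nonzero (it is $\pm$ the norm of $\alpha$), so $a_0 \neq 0$.

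Next, I would exploit that each conjugate satisfies $\alpha_i^d = a_{d-1}\alpha_i^{d-1} + \cdots + a_1 \alpha_i + a_0$. Multiplying both sides by $\alpha_i^n$ and summing over $i = 1, 2, \ldots, d$ yields
\[
R(n+d) = a_{d-1}R(n+d-1) + a_{d-2}R(n+d-2) + \cdots + a_1 R(n+1) + a_0 R(n)
\]
for every $n \ge 1$. This is exactly the shape of \eqref{eq:recurrence} with inhomogeneous term $b = 0$, order $d$, and leading coefficient $a_0 \neq 0$, so it qualifies as an ILRS (in fact an LRS).

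It remains to verify that $R(n)$ takes integer values. Each $R(n) = \sum_{i=1}^d \alpha_i^n$ is a symmetric function of the Galois conjugates of $\alpha$, hence fixed by every element of $\mathrm{Gal}(\overline{\mathbb{Q}}/\mathbb{Q})$, so $R(n) \in \mathbb{Q}$. On the other hand $R(n)$ is a finite sum of algebraic integers, hence itself an algebraic integer; a rational algebraic integer lies in $\mathbb{Z}$.

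I do not anticipate any real obstacle in this argument; the only delicate bookkeeping item is ensuring $a_0 \neq 0$, which is why the hypothesis $\alpha \neq 0$ is needed (otherwise the minimal polynomial would be $x$, and the resulting ``recurrence'' would degenerate). Everything else is bookkeeping matching the definition of an ILRS given after \eqref{eq:recurrence}.
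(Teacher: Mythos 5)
Your proof is correct and follows essentially the same route as the paper: both use the minimal polynomial of $\alpha$ as the characteristic polynomial, multiply the relation $f(\alpha_j)=0$ by $\alpha_j^n$, and sum over the conjugates. You are in fact slightly more careful than the paper, which dismisses the integrality of $R(n)$ as ``clear from the theory of fields'' and does not explicitly note that $\alpha\neq 0$ guarantees $a_0\neq 0$.
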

\begin{proof}It is clear that $R(n)\in \mathbb{Z}$ for every $n\ge 1$ from the theory of fields. Let $f(X)=X^{d}-a_{d-1} X^{d-1} -a_{d-2}X^{d-2} -\cdots -a_0$ be the minimal polynomial of $\alpha$ with integral coefficients. Since $\alpha_j^{d} -a_{d-1} \alpha_j^{d-1}-\cdots - a_0=f(\alpha_j)=0$ for each $j=1,2,\ldots d$, we obtain 
\begin{align*}
\alpha_j^{n+d} &= \alpha_j^{n} (a_{d-1}\alpha_j^{d-1} +a_{d-2} \alpha_j^{d-2}+\cdots + a_0 )\\
& = a_{d}\alpha_j^{n+d-1}+a_{d-2}\alpha_j^{n+d-2}+\cdots + a_0\alpha_j^n.
\end{align*}
Therefore, by taking the summation over $j$, we have
\[
R(n+d)=a_{d-1}R(n+d-1)+a_{d-2}R(n+d-2)+\cdots + a_0R(n). 
\] 
for every $n\geq 1$.
\end{proof}

\begin{theorem}\label{Theorem:PNT}
There exists $C>0$ such that for every real number $X\ge 2$, 
\begin{align*}
\vartheta(X)\coloneqq \sum_{p\leq X} \log p &= X+O\left(X\exp(-C\sqrt{\log X} )  \right).
\end{align*}
\end{theorem}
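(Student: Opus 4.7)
This is the classical Prime Number Theorem with the de la Vallée Poussin error term. Since the result is a well-known benchmark of analytic number theory, in the paper itself the cleanest course of action is simply to cite a standard reference such as Tenenbaum's \emph{Introduction to Analytic and Probabilistic Number Theory} or Iwaniec--Kowalski's \emph{Analytic Number Theory}, rather than reproducing the proof. Below I sketch the strategy one would follow if reproducing it.

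First I would reduce from $\vartheta$ to the Chebyshev function $\psi(X) = \sum_{p^k \leq X} \log p$ via the elementary identity $\vartheta(X) = \psi(X) + O(\sqrt{X}\log X)$, so it suffices to show $\psi(X) = X + O(X\exp(-C\sqrt{\log X}))$. Starting from the Euler product for the Riemann zeta function, I would then use the Dirichlet series identity $-\zeta'(s)/\zeta(s) = \sum_n \Lambda(n) n^{-s}$ together with a truncated Perron formula
\[
\psi(X) = \frac{1}{2\pi i}\int_{c-iT}^{c+iT} \left(-\frac{\zeta'(s)}{\zeta(s)}\right)\frac{X^s}{s}\,ds + O\!\left(\frac{X(\log X)^2}{T}\right)
\]
for a suitable $c>1$ and a parameter $T$ to be optimized.

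The decisive ingredient is the classical zero-free region for $\zeta$: there exists an absolute constant $c_0>0$ such that $\zeta(\sigma+it)\neq 0$ for $\sigma \geq 1 - c_0/\log(|t|+2)$, together with the bound $\zeta'(s)/\zeta(s) \ll \log^{2}(|t|+2)$ throughout this region. Proving this is the main obstacle; the standard route combines the classical positivity trick $3+4\cos\theta+\cos 2\theta \geq 0$ with Hadamard's factorization of $(s-1)\zeta(s)$ and the Borel--Carathéodory inequality to control logarithmic derivatives.

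Once the zero-free region is secured, I would shift the contour to the curve $\sigma = 1 - c_0/\log(|t|+2)$, $|t|\leq T$, picking up only the simple pole of $-\zeta'/\zeta$ at $s=1$, whose residue is $X$. Bounding the resulting vertical and horizontal integrals using the estimate on $\zeta'/\zeta$ produces an error of size $X\exp(-c_1\log X / \log T)$, and balancing this against the Perron error $X(\log X)^2/T$ by choosing $\log T \asymp \sqrt{\log X}$ yields the claimed savings of $\exp(-C\sqrt{\log X})$. Translating back from $\psi$ to $\vartheta$ absorbs the negligible $\sqrt{X}\log X$ term into the error and completes the proof.
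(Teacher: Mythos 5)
Your proposal matches the paper exactly in spirit: the paper's entire proof is a citation to Montgomery--Vaughan (Theorem~6.9), and your recommendation to cite a standard reference is precisely what is done. The accompanying sketch via Perron's formula, the de la Vall\'ee Poussin zero-free region, and the contour shift is the standard and correct argument behind that citation.
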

\begin{proof}
See \cite[Theorem~6.9]{MontgomeryVaughan}.
\end{proof}

\section{proof of the main theorems and Corollary}
\begin{proof}[Proof of Theorem~\ref{Theorem:Main1}]Fix an arbitrary integer $H$.  Let $m_0$ be a sufficiently large integer depending only on $M$, $R_0$, $R_1$, $\ldots$, $R_M$, and $H$. If necessary, we implicitly replace $m_0$ with a larger one. We take an arbitrary integer $m\ge m_0$.  Let $U(n)=(R_1 \circ \cdots \circ R_M)(n)$ for each $n\geq 1$, where $U(n)=n$ when $M=0$. 

By \eqref{Condition3:Thm1} in Theorem~\ref{Theorem:Main1}, we may suppose that $|(R_0\circ U)(m)|-H\ge 2$. 
Take an arbitrary integer $h$ such that $|h|\leq H$. Let $p_h$ be the smallest prime factor\footnote{The prime numbers $p_h$ $(|h|\leq H)$ are not necessarily distinct.} of $(R_0 \circ U)(m)+h$.  By \eqref{Condition1:Thm1} and \eqref{Condition2:Thm1}, applying Lemma~\ref{Lemma:Key} with $p\coloneqq p_h$, there exists a positive integer $L(p_h)$ such that the sequence 
\[
((R_0\circ U)(n) \mod p_h)_{n\ge m} 
\]
is purely $L(p_h)$-periodic. Let 
\[
L= \prod_{p\in \{p_h \colon |h|\leq H \}} L(p).  
\] 
Therefore,  for every $n\ge 0$, we have 
\[
(R_0\circ U)(L(p_h)n+m)+h \equiv (R_0\circ U)(m)+h  \equiv 0 \mod p_{h}.\qedhere
\]
\end{proof}
\begin{proof}[Proof of Theorem~\ref{Theorem:Main2}]
Let $m_0$ be a sufficiently large integer depending only on $M$, $R_0$, $R_1$, $\ldots$, $R_M$,  and take an arbitrary integer $m\ge m_0$. Let $H=H(m)=|(R_0\circ U)(m)|-2$. By \eqref{Condition3:Thm2} in Theorem~\ref{Theorem:Main2}, we have  
\[
H= |(R_0\circ U)(m)|-2 \geq 2\log m.
\]
Let $p_h$ be the smallest prime factor of $(R_0\circ U)(m)+h$ for every $|h|\leq H$. Since $m\ge m_0$ and $m_0$ is sufficiently large, by \eqref{Condition1:Thm1} and \eqref{Condition2:Thm1}, Lemma~\ref{Lemma:Key} implies that for every $|h|\leq H$, there exists  $L(p_h)\in \mathbb{Z}$  such that $((R_0 \circ U)(n) \mod p_h)_{n\ge m}$ is purely $L(p_h)$-periodic and 
\[
1\leq L(p_h)\leq p_h^{D},
\]
where $D=d_0\cdots d_M$.  Let 
\[
L=\prod_{p\in \{p_h\colon |h|\leq H  \} } L(p).   
\]
Since the definition of $H$ yields that 
\[
\biggl[|(R_0\circ U)(m)|-H,\ |(R_0\circ U)(m)|+H\biggr]=[2,2H+2]
\]
and $p_h$ is the smallest prime factor of $(R_0\circ U)(m)+h$, we obtain 
\[
\{p_h\colon |h|\leq H \}=\{p \colon p\leq 2H+2 \}. 
\]
Thus, we have
\[
1\leq L \leq  \left(\prod_{p\leq 2H+2 }   p^D \right) = \left( e^{\sum_{p\leq 2H+2 } \log p } \right)^D= e^{D\vartheta (2H+2) }.
\]
Let $n=\lceil e^{D\vartheta (2H+2)}/L \rceil $ and take an arbitrary integer $h$ with $|h|\leq H$. Then, we have 
\begin{align*}
|(R_0\circ U) (Ln+m)|+h &= \pm (R_0\circ U) (Ln+m)+h\\
&\equiv \pm((R_0\circ U) (m)\pm h ) \equiv 0 \mod p_{\pm h}.
\end{align*}
Therefore, the number $|(R_0\circ U) (Ln+m)|+h$ is divisible by $p_h$ or $p_{-h}$. Further, by \eqref{Condition3:Thm2} and Theorem~\ref{Theorem:PNT}, we obtain  
\begin{align*}
|(R_0\circ U) (Ln+m)|+h&\geq (2/D)\log (Ln+m)-H\\
&\geq (2/D)\log e^{D\vartheta(2H+2)} -H>2H+2.
\end{align*}
Therefore, $|R_0\circ U (Ln+m)|+h$ is a composite number.

Let $N=Ln+m$. Then, combining the above estimates and Theorem~\ref{Theorem:PNT} implies that
\[
 \log N=D\vartheta(2H+2) +O(1)=   2HD + O(HD \exp(-C\sqrt{\log H} )   ) , 
 \]
and hence
\[
H=  \frac{\log N}{2D} +O\left(\frac{\log N}{D} \exp\left(-\frac{C}{2}\sqrt{\log\log N} \right)    \right)=\delta(N).
\]
 By choosing $m=m_0, m_0+1,\ldots$, the intervals
 \[
\biggl (|(R_0 \circ U)(N)|-\delta(N),\ |(R_0 \circ U)(N)|+\delta(N) \biggr)
 \]
do not contain any prime numbers for infinitely many integers $N\ge 1$. 
\end{proof}

\begin{proof}[Proof of Corollary~\ref{Corollary1}]Let $U(n)=(R_1\circ \cdots \circ R_M)(n)$ for every $n\geq 1$. The sequences $(R_1(n))_{j\ge 1},\ldots, (R_M(n))_{j\ge 1}$ clearly satisfy \eqref{Condition1:Thm1} and \eqref{Condition2:Thm1} since they are reversible, positive, and strictly increasing.

Let $\alpha$ be a Pisot or Salem number of degree $d_0\ge 1$. Let $\alpha_1=\alpha,\alpha_2,\ldots, \alpha_{d}$ be all the conjugates of $\alpha$. Let $R(n)$ be as in Lemma~\ref{Lemma:Trace}. Then, for every $n\geq 1$, we have
\[
\lfloor \alpha_1^{U(n)} \rfloor =\alpha_1^{U(n)} -\{\alpha_1^{U(n)} \}=(R\circ U) (n) +g(n),  
\]
where $\{x\}$ denotes the fractional part of $x$ and $g(n)\coloneqq -\alpha_2^{U(n)}\cdots - \alpha_{d}^{U(n)}-\{\alpha_1^{U(n)} \}$. 

If $\alpha$ is a Pisot number, then all of the conjugates of $\alpha$ except for $\alpha$ itself lie in the unit open disk. Thus, $g(n)$ is bounded and $((R\circ R_1\circ \cdots\circ R_M) (n))_{n\ge 1}$ satisfies \eqref{Condition3:Thm1} and \eqref{Condition3:Thm2} since $(U(n))_{n\ge 1}$ is strictly increasing and $\alpha=\alpha_1>1$. Therefore,  Theorems~\ref{Theorem:Main1} and \ref{Theorem:Main2} imply the corollary for Pisot numbers.

If $\alpha$ is a Salem number, then $d$ should be even and greater than or equal to $4$, and all of the conjugates of $\alpha$ are formed as $\alpha$, $1/\alpha$, $e^{2\pi i \theta_1}$, $e^{-2\pi i \theta_1}$, $\ldots$, $e^{2\pi i \theta_r}$, $e^{-2\pi i \theta_r}$, where $r=(d-2)/2$. Therefore, $g(n)$ is also bounded and $((R\circ R_1\circ \cdots\circ R_M)  (n))_{n\ge 1}$ satisfies \eqref{Condition3:Thm1} and \eqref{Condition3:Thm2}. Theorems~\ref{Theorem:Main1} and \ref{Theorem:Main2} lead to the corollary for Salem numbers.
\end{proof}

\section*{Acknowledgement}
The author was supported by JSPS KAKENHI Grant Number JP25K17223.

\bibliographystyle{amsalpha}
\bibliography{references}

\end{document}